\newtheorem{theorem}{Theorem}
\newtheorem{lemma}{Lemma}
\newtheorem{proposition}{Proposition}
\newtheorem{remark}{Remark}
\numberwithin{equation}{section}
\begin{document}
\title{Riemannian Submersions Need Not Preserve Positive Ricci Curvature}
\author{Curtis Pro}
\author{Frederick Wilhelm}

\begin{abstract}
If $\pi :M\rightarrow B$ is a Riemannian Submersion and $M$ has positive
sectional curvature, O'Neill's Horizontal Curvature Equation shows that $B$
must also have positive curvature. We show there are Riemannian submersions
from compact manifolds with positive Ricci curvature to manifolds that have
small neighborhoods of (arbitrarily) negative Ricci curvature, but that
there are no Riemannian submersions from manifolds with positive Ricci
curvature to manifolds with nonpositive Ricci curvature.
\end{abstract}

\maketitle

In \cite{O}, O'Neill shows that if $\pi :M\rightarrow B$ is a Riemannian
submersion, then 
\begin{equation*}
\mathrm{sec}_{B}\left( x,y\right) =\mathrm{sec}_{M}\left( \tilde{x},\tilde{y}%
\right) +3\left\vert A_{\tilde{x}}\tilde{y}\right\vert ^{2},
\end{equation*}%
where $\{x,y\}$ is an orthonormal basis for a tangent plane to $B$, $\{%
\tilde{x},\tilde{y}\}$ a horizontal lift of $\{x,y\}$ and $A$ is O'Neill's $%
A $-tensor. An immediate observation is that if the sectional curvature of
all two-planes in $M$ are bounded below by $k$, then the same must be true
for all two-planes in $B$. In particular, Riemannian submersions preserve
positive sectional curvature.

David Wraith asked us for an example of a Riemannian submersion that does
not preserve positive Ricci curvature. While many people think that such an
example must exist, we do not know of one in the literature but provide some
here.

\begin{theorem}
\label{main theorem} For any $C>0$, there is a Riemannian submersion $\pi
:M\rightarrow B$ for which $M$ is compact with positive Ricci curvature and $%
B$ has some Ricci curvatures less than $-C$.
\end{theorem}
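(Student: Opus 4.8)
The plan is to let $\pi$ be the projection of a warped product. Fix an integer $k\ge 2$, let $F$ be $S^{k}$ with a round metric of radius $\mu>0$, let $g_{B}$ be a metric on $B=S^{2}$, and let $\phi\colon B\to(0,\infty)$ be a positive function; set
\[
M=B\times F,\qquad g_{M}=\pi^{*}g_{B}+\phi^{2}\,g_{F},\qquad \pi\colon M\to B \text{ the projection}.
\]
Since $g_{M}$ is block diagonal with respect to $TB\oplus TF$, the horizontal distribution of $\pi$ is exactly the $TB$-summand, on which $d\pi$ is an isometry onto $(B,g_{B})$; so $\pi$ is a Riemannian submersion with base $(B,g_{B})$, whatever $g_{B}$, $\phi$, $\mu$ are. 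Its fibers $\{b\}\times F$ carry the metric $\phi(b)^{2}g_{F}$ and are totally geodesic precisely when $\phi$ is constant. That $\phi$ \emph{must} be non-constant is built into the problem: a short computation with O'Neill's curvature equations shows that a Riemannian submersion with totally geodesic fibers always satisfies $\ric_{B}(x,x)\ge\ric_{M}(\tilde{x},\tilde{x})$, hence preserves positive Ricci curvature. So the task is to choose $g_{B}$ with a Ricci curvature less than $-C$, together with $\phi$ and $\mu$, so that $\ric_{M}>0$.

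Recall the standard Ricci identities for a warped product: for a horizontal vector $X$ and a $g_{M}$-unit vertical vector $V$,
\[
\ric_{M}(X,X)=\ric_{B}(X,X)-\frac{k}{\phi}\,\mathrm{Hess}_{g_{B}}\phi(X,X),\qquad \ric_{M}(X,V)=0,
\]
\[
\ric_{M}(V,V)=\frac{k-1}{\mu^{2}\phi^{2}}-\frac{\Delta_{g_{B}}\phi}{\phi}-(k-1)\,\frac{|\nabla_{g_{B}}\phi|^{2}}{\phi^{2}}.
\]
Hence $\ric_{M}>0$ if and only if $\mathrm{Hess}_{g_{B}}\phi<\frac{\phi}{k}\,\ric_{g_{B}}$ as quadratic forms on $B$, and $\frac{k-1}{\mu^{2}}>\phi\,\Delta_{g_{B}}\phi+(k-1)|\nabla_{g_{B}}\phi|^{2}$. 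With $g_{B}$ and $\phi$ fixed, the right-hand side of the second inequality is a bounded function on the compact manifold $B$, so that inequality holds for all sufficiently small $\mu$. Everything thus reduces to finding a metric $g_{B}$ on $S^{2}$ and a positive function $\phi$ with $\mathrm{Hess}_{g_{B}}\phi<\frac{\phi}{k}\ric_{g_{B}}$ everywhere, yet with $\ric_{g_{B}}<-C$ somewhere.

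To produce these, pick a small disk $D\subset B$ about a point $x_{0}$, let $g_{B}$ restrict on an inner sub-disk $D_{0}\subset D$ to a metric of constant curvature $-(C+1)$ (so $\ric_{g_{B}}=-(C+1)g_{B}<-C$ on $D_{0}$), let its curvature increase continuously from $-(C+1)$ out through a collar to a large positive value, and let $g_{B}$ be round on $B\setminus D$. On the region where the curvature of $g_{B}$ is $\le -(C+1)$ put
\[
\phi=a-b\,\cosh(\sqrt{C+1}\;d_{g_{B}}(x_{0},\cdot)),
\]
with constants $a>b>0$ chosen so that $\phi$ stays positive there (this forces $D_{0}$ small) and $(k+1)b\ge a$. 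Then $\phi$ is smooth (since $\cosh$ is an even function of $\sqrt{C+1}\,d_{g_{B}}(x_{0},\cdot)$, hence a smooth function of $d_{g_{B}}(x_{0},\cdot)^{2}$), and the Hessian comparison theorem gives $\mathrm{Hess}_{g_{B}}\phi\le -b(C+1)g_{B}\le\frac{\phi}{k}\ric_{g_{B}}$ there. Now extend $\phi$ outward so that it keeps decreasing across the negatively curved part of the collar, turns around only once the curvature of $g_{B}$ is large and positive, and climbs back to $\phi\equiv 1$ on $B\setminus D$; one arranges this (routinely, but with some care) so that $\mathrm{Hess}_{g_{B}}\phi<\frac{\phi}{k}\ric_{g_{B}}$ holds throughout and $|\nabla_{g_{B}}\phi|$ and $\Delta_{g_{B}}\phi$ stay bounded in terms of $C$. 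A small choice of $\mu$ then finishes the construction: $M$ is a compact manifold (with $k=2$, a $4$-manifold) with $\ric_{M}>0$, $\pi$ is a Riemannian submersion, and its base $(B,g_{B})$ has Ricci curvature $-(C+1)<-C$ on $D_{0}$.

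The step I expect to be the real obstacle is exactly the compatibility of $g_{B}$ with $\phi$ in the last construction. Since $B$ is compact, $\phi$ cannot be concave everywhere (otherwise $\Delta_{g_{B}}\phi\le 0$ forces $\phi$ constant), so $\phi$ must be convex in some direction somewhere; but the condition $\mathrm{Hess}_{g_{B}}\phi<\frac{\phi}{k}\ric_{g_{B}}$ confines every such point to the region where $\ric_{g_{B}}$ is (sufficiently) positive, while the negatively curved part of $B$ must lie in the region where $\phi$ is concave in all directions. Because the curvature of $g_{B}$ must vary continuously, $\phi$ has to remain concave all the way across the negatively curved collar, and a concave radial profile about $x_{0}$ is necessarily non-increasing (its level sets are convex distance circles), so $\phi$ can only begin to recover once the curvature is positive, indeed large; threading these constraints together is the crux. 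It is also what dictates that the negative curvature of $B$ be confined to a disk: a disk of strictly negative curvature contains no closed geodesic (by Gauss--Bonnet), which is what leaves room for a concave $\phi$, whereas a negatively curved neck contains a closed geodesic along which no function is concave, and there the construction collapses.
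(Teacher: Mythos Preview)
Your overall strategy coincides with the paper's: build $M$ as a warped product $S^{2}\times_{\phi}F$ over a rotationally symmetric $S^{2}$, choose the warping function so that it compensates the negative curvature of the base in the horizontal Ricci formula, and then shrink the fibre to handle the vertical Ricci. The condition you isolate, $\mathrm{Hess}_{g_{B}}\phi<\tfrac{\phi}{k}\,\ric_{g_{B}}$, is exactly the paper's horizontal positivity condition in disguise (the paper writes $\phi=e^{\nu}$ and obtains the two scalar inequalities $\ddot{\nu}+\dot{\nu}^{2}<-\ddot{\varphi}/(k\varphi)$ and $\dot{\varphi}\,\dot{\nu}<-\ddot{\varphi}/k$).

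Where the constructions diverge is in how the negative base curvature is produced. You insert a disk of constant curvature $-(C+1)$ and write down an explicit $\phi$ on it. The paper instead keeps $g_{B}$ $C^{1}$--close to the round metric on all of $[0,\pi]$ and manufactures arbitrarily negative curvature $-\ddot{\varphi}/\varphi$ by placing a \emph{small} positive bump $\ddot{\varphi}(p)=\eta$ at a point $p$ near a pole, where $\varphi(p)\approx\sin p\approx p$ is tiny; sending $p\to 0$ drives $-\eta/\varphi(p)$ below any $-C$. The payoff is that the warping function $\nu$ only needs to be ``active'' on an interval of length $3\delta$ around $p$, with $\|\nu\|_{C^{2}}\le 20\eta/k$ elsewhere, so outside that interval the total space is $C^{2}$--close to the round product and automatically has $\ric^{h}\ge 1/2$. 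The transition from the bad region to the good one thus costs essentially nothing, and the entire construction is packaged into a single lemma proved by prescribing $\ddot{\nu}$ and integrating.

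Your proof, by contrast, leaves exactly this transition undone: you say one extends $\phi$ across the collar ``routinely, but with some care,'' and you yourself flag this as the real obstacle. That is honest, but it is the gap. Matching a strictly concave radial profile through a region where the curvature of $g_{B}$ swings from $-(C+1)$ up to a large positive value, while maintaining \emph{both} the $\partial_{r}$-- and $\partial_{\theta}$--components of the strict inequality and keeping $\phi$ positive and eventually constant, is precisely the delicate step, and you have not carried it out. The paper's $C^{1}$--closeness device is the idea that makes this step clean rather than merely plausible. (One small correction on the part you did write out: at $r=0$ one has $\mathrm{Hess}_{g_{B}}\phi=-b(C+1)g_{B}$ exactly, so you need $(k+1)b>a$ strictly, not $\ge$, to get the strict inequality there.)
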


The examples are constructed as warped products $S^{2}\times _{\nu }F$,
where $F$ is any manifold that admits a metric with Ricci curvature $\geq 1$%
, and the metrics on $S^{2}$ are $C^{1}$--close to the constant curvature $1$
metric. In his thesis, the first author provides examples where the base
metric on $S^{2}$ can be $C^{1}$--close to any predetermined,
positively-curved, rotationally-symmetric metric on $S^{2},$ and $F$ is any
manifold that admits a metric with Ricci curvature $\geq 1.$ \cite{Pro}

Since the metrics on our base spaces are $C^{1}$--close to the constant
curvature $1$ metric on $S^{2}$ and yet have a curvature that is $\leq -C,$
the region of negative curvature is necessarily small, and in fact has
measure converging to 0 as $-C\rightarrow -\infty .$ In this sense, our
examples are local. On the other hand, we show that a truly global example
is not possible.

\begin{theorem}
\label{No Global Example}If $M$ is a compact Riemannian manifold with
positive Ricci curvature, then there is no Riemannian submersion $\pi
:M\longrightarrow B$ to a space $B$ with nonpositive Ricci curvature.
\end{theorem}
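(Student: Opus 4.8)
The plan is to run O'Neill's Ricci curvature equation in the horizontal direction and then to produce a contradiction by plugging the mean curvature vector field of the fibres into it. The identity I will use is: for a unit vector $x \in T_bB$ with horizontal lift $\tilde x$ and an orthonormal basis $\{U_i\}$ of the vertical space at the footpoint,
$$\mathrm{Ric}_M(\tilde x, \tilde x) = \mathrm{Ric}_B(x,x) - 2|A_{\tilde x}|^2 - \sum_i |T_{U_i}\tilde x|^2 + \langle \nabla_{\tilde x} N, \tilde x\rangle,$$
where $A, T$ are O'Neill's tensors and $N = \sum_i T_{U_i}U_i$ is the (horizontal) mean curvature field of the fibres. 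The one genuinely delicate point is the sign of the term $\langle\nabla_{\tilde x}N,\tilde x\rangle$; I would pin it down from Besse's \emph{Einstein Manifolds}, Chapter~9, and sanity-check it on the warped products $S^2\times_f F$, for which the identity must reduce to $\mathrm{Ric}_M(x,x) = \mathrm{Ric}_B(x,x) - \tfrac{\dim F}{f}\,\mathrm{Hess}\,f(x,x)$.

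Now assume, toward a contradiction, that $\mathrm{Ric}_B \le 0$ everywhere. Compactness of $M$ together with $\mathrm{Ric}_M > 0$ yields a constant $\kappa := \min_{|v|=1}\mathrm{Ric}_M(v,v) > 0$. Since $-\mathrm{Ric}_B(x,x)$, $2|A_{\tilde x}|^2$ and $\sum_i|T_{U_i}\tilde x|^2$ are all nonnegative, the identity forces, at every point of $M$ and for every horizontal vector $X$,
$$\langle\nabla_X N, X\rangle \ \ge\ \mathrm{Ric}_M(X,X)\ \ge\ \kappa\,|X|^2.$$
The key step is to apply this with $X = N$ — legitimate since $N$ is horizontal — which gives $\tfrac12\,N(|N|^2) = \langle\nabla_N N, N\rangle \ge \kappa|N|^2 \ge 0$. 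Hence $|N|^2$ is nondecreasing along every integral curve of $N$ on the compact manifold $M$; evaluating this at a point where $|N|^2$ attains its maximum (so that the derivative along the flow line vanishes there) forces $N \equiv 0$, i.e. all fibres are minimal. But then the identity collapses to $\mathrm{Ric}_M(\tilde x, \tilde x) = \mathrm{Ric}_B(x,x) - 2|A_{\tilde x}|^2 - \sum_i|T_{U_i}\tilde x|^2 \le 0$, contradicting $\mathrm{Ric}_M > 0$. (Note that only compactness of $M$, not of $B$, is used.)

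The main obstacle is conceptual rather than computational: the obvious first attempt does not work. Summing O'Neill's identity over a horizontal orthonormal frame and integrating over $M$, the mean-curvature terms contribute $\int_M |N|^2\,dV \ge 0$ (after using $\int_M \operatorname{div}_M N = 0$ and computing the vertical divergence of $N$), which has exactly the wrong sign to close the argument. The point is to resist integrating and instead feed $N$ into the horizontal slot of the identity, invoking a maximum principle on $M$ directly; once O'Neill's Ricci formula is correctly stated, the rest is short.
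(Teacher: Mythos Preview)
Your argument is correct and follows a genuinely different route from the paper's. The paper integrates O'Neill's vertizontal and horizontal curvature equations along a horizontal geodesic $\gamma:[0,l]\to M$: after using $\mathrm{Ric}_B\le 0$ and the identity $\sum_j|A_{\dot\gamma}E_j|^2=\sum_i|A_{\dot\gamma}V_i|^2$, the inequality $l\le\int_0^l\mathrm{Ric}_M(\dot\gamma,\dot\gamma)$ collapses to the boundary term $\langle N,\dot\gamma\rangle\big|_0^l$. Compactness of $T^1M$ is then invoked to produce an \emph{almost-periodic} horizontal geodesic (one with $\mathrm{dist}(\dot\gamma(0),\dot\gamma(l))$ small and $l\ge 1$), along which uniform continuity of $x\mapsto\mathrm{Trace}(W\mapsto T_W x)$ forces the boundary term below $1$, a contradiction.

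You instead extract from the same identity the pointwise inequality $\langle\nabla_X N,X\rangle\ge\kappa|X|^2$ for all horizontal $X$, feed $X=N$ into it, and close with the maximum principle for $|N|^2$ on the compact $M$. This is shorter and more elementary: it trades the dynamical lemma on $T^1M$ for the trivial fact that a smooth function on a closed manifold has a maximum. The paper's argument, by contrast, never assembles the full horizontal Ricci identity you quote and works directly with O'Neill's raw equations, so its inputs are slightly more primitive; the price is the extra machinery of almost-periodic geodesics. Your remark that the naive global integration over $M$ produces $\int_M|N|^2$ with the wrong sign is exactly the obstruction both proofs are circumventing---the paper by localizing to a single geodesic with nearly matching endpoints, you by refusing to integrate at all.
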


We prove Theorem \ref{main theorem} in Sections 1 and 2 and Theorem \ref{No
Global Example} in Section 3.

We are grateful to Bun Wong for asking a question that lead us to Theorem %
\ref{No Global Example}, to Pedro Sol\'{o}rzano for assisting us with a
calculation, and especially to David Wraith for insightful criticisms of the
manuscript and asking us to provide the examples of Theorem \ref{main
theorem}.

\section{Vertical Warping}

Given a Riemannian submersion $\pi :M\rightarrow B$, the vertical and
horizontal distributions are defined as $\mathcal{V}:=ker\,\pi _{\ast }$ and 
$\mathcal{H}:=(ker\,\pi _{\ast })^{\perp }$, respectively. This gives a
splitting of the tangent bundle as 
\begin{equation*}
TM=\mathcal{V}\oplus \mathcal{H}.
\end{equation*}%
If $g$ is the metric on $M$, we denote by $g^{h}$ and $g^{v}$ the
restrictions of $g$ to $\mathcal{H}$ and $\mathcal{V}$. Define a new metric $%
g_{\nu }:=e^{2\nu }g^{v}+g^{h}$ on $M$, where $\nu $ is any smooth function
on $B$. Note that both $\mathcal{H}$ and $g^{h}$ are unchanged, so $\pi
:(M,g_{\nu })\rightarrow B$ is also Riemannian.

The calculations that give important geometric quantities associated to $%
g_{\nu }$ in terms of $g$ and $\nu$ are carried out in \cite{GromWals} on
page 45. In particular, the $(0,2)$ Ricci tensor $\mathrm{Ric}_{\nu}$ of $%
g_{\nu }$ is given in detail. When $M=B^{m}\times F^{k}$ and $g$ is a
product metric, these quantities reduce to the following (Corollary 2.2.2 
\cite{GromWals}).

For horizontal $X,Y$ and vertical $U,V$, we have 
\begin{eqnarray}
\mathrm{Ric}_{\nu }(X,Y) &=&\mathrm{Ric}_{B}(X,Y)-k(\mathrm{Hess\,}\nu
(X,Y)+g(\nabla \nu ,X)g(\nabla \nu ,Y)),  \label{horz} \\
\mathrm{Ric}_{\nu }(X,U) &=&0,\text{ and}  \label{mixed} \\
\mathrm{Ric}_{\nu }(U,V) &=&\mathrm{Ric}_{F}(U,V)-g(U,V)e^{2\nu }(\Delta \nu
+k|\nabla \nu |^{2}).  \label{vert}
\end{eqnarray}%
(There is a sign error in the analog of Equation \ref{vert} in \cite%
{GromWals}.) We denote fields on $B$ and their horizontal lifts via $\pi
_{1}:B\times F\rightarrow B$ by the same letter. We write $B\times _{\nu }F$
to denote the warped product metric $g_{\nu }$ on $B\times F$.

\section{The Warped Product $S_{\protect\varphi }^{2}\times _{\protect\nu }F$%
}

Choose $\varphi :[0,\pi ]\rightarrow \lbrack 0,\infty )$ so that $S^{2}$
with the metric $g_{\varphi }=dr^{2}+\varphi ^{2}d\theta ^{2}$ is a smooth
Riemannian manifold denoted by $S_{\varphi }^{2}$. Let $\nu :[0,\pi
]\rightarrow R$ be a function on $S_{\varphi }^{2}$ that only depends on $r$%
. Consider the warped product $S_{\varphi }^{2}\times _{\nu }F$ where $%
(F,g_{F})$ is any $k$-dimensional manifold $(k\geq 2)$ with $\mathrm{Ric}%
_{F}\geq 1$. Using the notation $\dot{\nu}=\partial _{r}\nu $, since $\nu $
only depends on $r,$ 
\begin{equation*}
\nabla \nu =\dot{\nu}\partial _{r}.
\end{equation*}%
If $L$ is the Lie derivative, we have, 
\begin{eqnarray*}
2\mathrm{Hess}\,\nu &=&L_{\nabla \nu }g_{\varphi } \\
&=&L_{\dot{\nu}\partial _{r}}g_{\varphi } \\
&=&\dot{\nu}L_{\partial _{r}}g_{\varphi }+d\dot{\nu}dr+drd\dot{\nu} \\
&=&2\dot{\nu}\mathrm{Hess}\,r+2\ddot{\nu}dr^{2}.
\end{eqnarray*}%
Thus the Hessian of $\nu $ is given by

\begin{equation*}
\mathrm{Hess}\,\nu =\ddot{\nu}dr^{2}+\dot{\nu}\varphi \dot{\varphi}d\theta
^{2}.
\end{equation*}%
The Ricci tensor of $S_{\varphi }^{2}$ is given as 
\begin{equation*}
\mathrm{Ric}_{S_{\varphi }^{2}}=-\frac{\ddot{\varphi}}{\varphi }g_{\varphi }.%
\text{ (see \cite{Peter}, p.69)}
\end{equation*}%
Let $\mathrm{Ric}_{\nu }^{h}$ and $\mathrm{Ric}_{\nu }^{v}$ denote $\mathrm{%
Ric}_{\nu }$ restricted to the horizontal and vertical distribution,
respectively. Equation (\ref{horz}) can be written as 
\begin{equation}
-\mathrm{Ric}_{\nu }^{h}=\left[ \frac{\ddot{\varphi}}{\varphi }+k(\ddot{\nu}+%
\dot{\nu}^{2})\right] dr^{2}+\varphi \left[ \ddot{\varphi}+k\dot{\nu}\dot{%
\varphi}\right] d\theta ^{2},  \label{horiz ricci}
\end{equation}%
and equation (\ref{vert}) can be written as 
\begin{equation}
\mathrm{Ric}_{\nu }^{v}=\mathrm{Ric}_{F}-e^{2\nu }(\ddot{\nu}+\frac{\dot{%
\varphi}\dot{\nu}}{\varphi }+k\dot{\nu}^{2})g_{F}.  \label{vert ricci}
\end{equation}

Notice that since $\mathrm{Ric}_{F}\geq 1,$ if $\mathrm{Ric}_{\nu }^{h}$ is
positive, then these equations together with Equation (\ref{mixed}) imply
that $S_{\varphi }^{2}\times _{\nu +\ln \lambda }F$ has positive Ricci
curvature, provided $\lambda $ is a sufficiently small positive constant.

By requiring that $\ddot{\varphi}(p)>0$ for some point $p\in (0,\pi )$, the
projection $\pi _{1}:S_{\varphi }^{2}\times _{\nu }F\longrightarrow
S_{\varphi }^{2}$ is a Riemannian submersion for which the base has points
of negative Ricci curvature. Therefore, to describe a Riemannian submersion
that does not preserve positive Ricci curvature, it suffices to find
functions $\varphi $ and $\nu $ so that

\begin{enumerate}
\item $S_{\varphi }^{2}$ is smooth and has points of negative curvature,
that is, 
\begin{equation*}
\ddot{\varphi}(p)=\eta >0
\end{equation*}%
for some point $p\in (0,\pi )$,

\item $\mathrm{Ric}_{\nu }^{h}>0$, that is, 
\begin{eqnarray*}
\ddot{\nu}+\dot{\nu}^{2} &<&-\frac{\ddot{\varphi}}{k\varphi } \\
\dot{\varphi}\dot{\nu} &<&-\frac{\ddot{\varphi}}{k},\text{ and}
\end{eqnarray*}

\item $\nu $ is constant in a neighborhood of $0$ and $\pi .$
\end{enumerate}

For a $C^{r}$--function $f:R\longrightarrow R,$ we write 
\begin{equation*}
\left\Vert f\right\Vert _{C^{r}}:=\sup_{k\leq r}\sup_{x\in \mathbb{R}%
}\left\{ \left\vert f^{(k)}(x)\right\vert \right\} .
\end{equation*}%
The main idea is embodied in the following Lemma.

\begin{lemma}
\label{incomprehensible}Given any sufficiently small $\eta >0$, $p\in (0,\pi
/4),$ and $\varepsilon >0$, there is a $\delta >0$ and smooth functions $%
\varphi ,\nu :[0,\pi ]\rightarrow \lbrack 0,\infty )$ with the following
properties.

\begin{enumerate}
\item On $[0,\pi ]\setminus (p-\delta ,p+\delta ),$ 
\begin{equation*}
||\varphi -\sin ||_{C^{2}}<\varepsilon ,
\end{equation*}%
and $\ddot{\varphi}$ is maximal at $p$ with $\ddot{\varphi}(p)=\eta >0$.

\item On $[0,\pi ],$ 
\begin{equation*}
||\varphi -\sin ||_{C^{1}}<\varepsilon ,
\end{equation*}%
and for all $r$ in a small neighborhood about the endpoints of $[0,\pi ]$, 
\begin{equation*}
\varphi (r)=\sin (r).
\end{equation*}

\item $\nu $ is constant in a neighborhood of $0$ and $\pi .$

\item On $(p-2\delta ,p+\delta ),$ 
\begin{eqnarray*}
\ddot{\nu}+\dot{\nu}^{2} &<&-\frac{\ddot{\varphi}}{k\varphi }\text{ and} \\
\dot{\nu}\dot{\varphi} &<&-\frac{\ddot{\varphi}}{k}.
\end{eqnarray*}

\item On $[0,\pi ]\setminus (p-2\delta ,p+\delta ),$ 
\begin{equation*}
||\nu ||_{C^{2}}\leq 20\frac{\eta }{k}.
\end{equation*}
\end{enumerate}
\end{lemma}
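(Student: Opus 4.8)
The plan is to construct the two functions essentially by hand, building $\varphi$ first and then choosing $\nu$ to satisfy the differential inequalities (4) forced by the shape of $\varphi$. I would start with $\varphi_0 = \sin$, which satisfies $\ddot\varphi_0 = -\sin < 0$, so $S^2_{\varphi_0}$ is the round sphere. To introduce a point of positive curvature I would add a small, sharply-localized bump: fix a smooth nonnegative bump $\beta$ supported in $(-1,1)$ with $\beta(0)$ its maximum, and set $\varphi = \sin + c\,\delta^2\,\beta\!\left(\frac{r-p}{\delta}\right)$ for small parameters $c,\delta>0$. The key scaling observation is that $\ddot\varphi = -\sin + c\,\beta''\!\left(\frac{r-p}{\delta}\right)$, so the second derivative picks up a term of size $c\|\beta''\|_\infty$ that is \emph{independent of} $\delta$, while $\varphi - \sin$ and $\dot\varphi - \cos$ are of size $O(c\delta^2)$ and $O(c\delta)$ respectively. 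Choosing $c$ so that the maximum of $\ddot\varphi$ equals exactly $\eta$ (which is possible for $\eta$ small, since near $r=p$ we get $\ddot\varphi(p) \approx -\sin p + c\,\beta''(0)$, and one tunes $c$ by a continuity/intermediate-value argument), we immediately get properties (1) and (2): the $C^2$-closeness away from the bump is automatic since there $\varphi = \sin$ exactly, and the global $C^1$-closeness follows by taking $\delta$ small. One should also arrange $\beta$ so that the bump stays inside $(p-\delta,p+\delta) \subset (0,\pi/4)$ and hence $\varphi = \sin$ near the endpoints.

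Next I would choose $\nu$. Outside $(p-2\delta, p+\delta)$ I simply want $\nu$ small in $C^2$ and constant near the endpoints, so there condition (5) is the target and conditions (2)'s inequalities need not hold. The real work is on $(p-2\delta, p+\delta)$, where $\ddot\varphi$ can be as large as $\eta$ and $\varphi \approx \sin p$, $\dot\varphi \approx \cos p$ are bounded away from $0$ (since $p \in (0,\pi/4)$, $\cos p > \cos(\pi/4) > 0$). There I need
\begin{equation*}
\ddot\nu + \dot\nu^2 < -\frac{\ddot\varphi}{k\varphi} \quad\text{and}\quad \dot\nu\dot\varphi < -\frac{\ddot\varphi}{k}.
\end{equation*}
Since the right-hand sides are bounded above by $-\eta/(k\cdot 2) < 0$ roughly (wherever $\ddot\varphi$ is near its max), the natural move is to make $\dot\nu$ a sufficiently large \emph{negative} constant $-a$ on the bulk of this interval: then $\dot\nu\dot\varphi = -a\cos p$ can be made less than $-\eta/k$ by taking $a \geq \eta/(k\cos p)$, and $\ddot\nu + \dot\nu^2 = a^2$ on that portion, which is positive — so that inequality \emph{fails} unless we also put a strongly concave piece. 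This is the point where I would instead take $\nu$ to be decreasing and \emph{concave}: on the critical subinterval choose $\ddot\nu$ to be a large negative constant $-b$, so $\ddot\nu + \dot\nu^2 = -b + \dot\nu^2$; as long as we keep $|\dot\nu|$ from growing too large and take $b$ large enough (of order $\eta/(k\sin p)$ plus a margin to beat $\dot\nu^2$), both inequalities hold simultaneously. The interval has length $\sim 3\delta$, so a concave $\nu$ with $\ddot\nu \sim -b$ only changes $\dot\nu$ by $O(b\delta)$ and $\nu$ by $O(b\delta^2)$ across it; by choosing $\delta$ small relative to $1/b$ we keep $\dot\nu$ small throughout, which both validates the $\dot\nu^2$ estimate and lets us taper $\nu$ back to a constant outside $(p-2\delta,p+\delta)$ with total $C^2$-norm $\leq 20\eta/k$.

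Concretely, I would build $\ddot\nu$ as an explicit piecewise-smooth-then-mollified function: equal to a large negative constant $-b$ on, say, $(p-\delta, p)$ (the sub-window containing the support of the curvature bump, where $\ddot\varphi$ is largest), and then equal to a compensating positive bump on $(p-2\delta, p-\delta)$ and on $(p, p+\delta)$ chosen so that $\int \ddot\nu = 0$ and $\dot\nu$ returns to $0$, with $\nu$ itself returning to a constant; the positive part can be spread over intervals of comparable length so its size is also $O(b)$. All the bounds are then bookkeeping: $\|\nu\|_{C^2} \leq \max(\|\ddot\nu\|, \|\dot\nu\|, \|\nu\|) \lesssim b + b\delta + b\delta^2$, and since $b = O(\eta/k)$ with an absolute constant, choosing $\delta$ small enough makes this $\leq 20\eta/k$. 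I expect the main obstacle to be the \emph{simultaneous} satisfaction of both inequalities in (4) together with (5): the first inequality wants $\ddot\nu$ very negative (costly for $\|\nu\|_{C^2}$ and pushing against the $20\eta/k$ bound), while returning $\nu$ to a constant forces a positive $\ddot\nu$ elsewhere that must not be so large that it violates (5) or so small (spread over too long an interval) that it makes $|\dot\nu|$ large and breaks the $\dot\nu^2$ control — this is exactly why $\eta$ is required to be sufficiently small and why the window $(p-2\delta,p+\delta)$ is deliberately asymmetric (giving extra room on the left to absorb the compensating positive curvature of $\nu$ away from where $\ddot\varphi$ peaks). Verifying that all these constraints can be met with a single choice of constants, in the right order ($\eta$ given $\Rightarrow$ choose $b$ and the profile of $\ddot\nu$ $\Rightarrow$ choose $\delta$ small enough), is the technical heart of the lemma.
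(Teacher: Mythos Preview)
Your overall architecture---put a localized second-derivative bump into $\varphi$ near $p$, then design $\nu$ so that the two inequalities in (4) hold on the critical window while keeping $\nu$ $C^{2}$-small elsewhere---matches the paper's. But two concrete points are off.

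First, a sign slip in the $\varphi$ construction: if $\beta\ge 0$ has its maximum at $0$ then $\beta''(0)\le 0$, so $\ddot\varphi(p)=-\sin p+c\,\beta''(0)\le -\sin p<0$ and you never reach $\ddot\varphi(p)=\eta>0$. This is easily repaired (subtract the bump, or do what the paper does: prescribe $\ddot\varphi$ directly and integrate twice).

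Second, and this is the real gap, your layout for $\ddot\nu$ does not satisfy (4), and your reading of the asymmetry is backwards. You place the ``compensating positive bump'' of $\ddot\nu$ on $(p-2\delta,p-\delta)$ and on $(p,p+\delta)$. But the support of the $\ddot\varphi$-bump is $(p-\delta,p+\delta)$, with $\ddot\varphi(p)=\eta$; so on the right half $(p,p+\delta)$, near $r=p$, the first inequality in (4) forces $\ddot\nu+\dot\nu^{2}<-\eta/(k\varphi(p))<0$, which is impossible if $\ddot\nu>0$ there. And on the left, if $\dot\nu(p-2\delta)=0$ and $\ddot\nu>0$ on $(p-2\delta,p-\delta)$, then $\dot\nu$ enters $[p-\delta,p+\delta]$ \emph{positive}, so the second inequality $\dot\nu\,\dot\varphi<-\ddot\varphi/k$ fails wherever $\ddot\varphi>0$. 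In the paper, $\ddot\nu$ is \emph{negative throughout} $(p-2\delta,p+\delta)$: the extra $\delta$ on the left is used, while $\ddot\varphi\approx -\sin p<0$ keeps (4) slack, to drive $\dot\nu$ from $0$ down to $\dot\nu(p-\delta)=-3\eta/k$; on $[p-\delta,p+\delta]$ one then takes $-4\eta/(kp)\le\ddot\nu<0$, and the smallness hypothesis $(\eta/k)^{2}<\eta/(100kp)$ (this is exactly where ``$\eta$ sufficiently small'' enters) makes $\dot\nu^{2}$ negligible so that both inequalities in (4) hold. The compensating \emph{positive} $\ddot\nu$ lives entirely to the \emph{right} of the window, on $[p+\delta,\pi]$, where only (5) applies; since that interval has length exceeding $3\pi/4$ and $\dot\nu(p+\delta)\approx -3\eta/k$, one returns $\dot\nu$ to $0$ with $|\ddot\nu|$ comfortably below $20\eta/k$. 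Note also that your estimate $b=O(\eta/k)$ is too optimistic: near $r=p$ the first inequality in (4) forces $|\ddot\nu|\gtrsim \eta/(k\sin p)\sim \eta/(kp)$, which for small $p$ is much larger than $20\eta/k$---harmless inside the window, but another reason the positive part of $\ddot\nu$ cannot be confined there.
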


\begin{proof}
By controlling the size of the interval for which $\ddot{\varphi}$ is very
different from $-\sin $, one can construct a function $\varphi $ that
satisfies $\left( 1\right) $ and $\left( 2\right) ,$ by prescribing $\ddot{%
\varphi}$ and integrating.

Since $||\varphi -\sin ||_{C^{2}}$ is small on $[0,\pi ]\setminus (p-\delta
,p+\delta ),$ (4) is satisfied on $\left[ 0,p-\delta \right] ,$ provided 
\begin{eqnarray}
\nu |_{\left[ 0,p-2\delta \right] } &\equiv &0\text{ and}  \notag \\
\left( \ddot{\nu}+\dot{\nu}^{2}\right) |_{\left( p-2\delta ,p-\delta \right]
} &<&0.  \label{nu inequalities--p-2delta}
\end{eqnarray}

Since $\ddot{\varphi}$ is maximal at $p$ with $\ddot{\varphi}(p)=\eta >0,$
and since $||\varphi -\sin ||_{C^{1}}<\varepsilon ,$ (4) is satisfied on $%
\left[ p-\delta ,p+\delta \right] ,$ provided 
\begin{eqnarray}
\left( \ddot{\nu}+\dot{\nu}^{2}\right) |_{\left[ p-\delta ,p+\delta \right]
} &<&-2\frac{\eta }{kp}\text{ and}  \notag \\
\frac{\dot{\nu}|_{\left[ p-\delta ,p+\delta \right] }}{\sqrt{2}} &<&-2\frac{%
\eta }{k}.  \label{nu inequalities}
\end{eqnarray}%
We require that $\eta $ be small enough so that $\left( \frac{\eta }{k}%
\right) ^{2}<\frac{\eta }{100kp}.$ By prescribing $\ddot{\nu}$ and
integrating$,$ we can then choose $\nu $ so that it satisfies (\ref{nu
inequalities--p-2delta}) on $\left[ 0,p-\delta \right] ,$ (\ref{nu
inequalities}) on $\left[ p-\delta ,p+\delta \right] $, and, in addition, 
\begin{eqnarray*}
\dot{\nu}\left( p-\delta \right) &=&-3\frac{\eta }{k}\text{ and} \\
-4\frac{\eta }{kp} &\leq &\ddot{\nu}|_{\left[ p-\delta ,p+\delta \right] }<0.
\end{eqnarray*}%
We then have the following bound on $\dot{\nu}\left( p+\delta \right) $ 
\begin{eqnarray*}
0 &>&\dot{\nu}\left( p+\delta \right) \\
&=&\dot{\nu}\left( p-\delta \right) +\int_{p-\delta }^{p+\delta }\ddot{\nu}
\\
&\geq &-3\frac{\eta }{k}-2\delta \left( 4\frac{\eta }{kp}\right) .
\end{eqnarray*}

So by choosing $\delta $ to be sufficiently small, we can, by prescribing $%
\ddot{\nu},$ extend $\nu $ to satisfy (5) and (3) on $\left[ p+\delta ,\pi %
\right] .$
\end{proof}

By composing with projection, $\pi _{1}:S^{2}\times F\longrightarrow S^{2},$
we can view $r$ as a function on $S^{2}\times F,$ which by abuse of notation
we also call $r$.

\begin{proposition}
\label{still pos} There is an $\eta >0$ with the following property. For any 
$p\in \left( 0,\frac{\pi }{4}\right) ,$ there is an $\varepsilon \left(
p\right) >0$ so that for $\varphi $ and $\nu $ as in Lemma \ref%
{incomprehensible}, 
\begin{equation*}
\mathrm{Ric}_{\nu }^{h}\geq 1/2
\end{equation*}%
on $r^{-1}\left( [0,\pi ]\setminus (p-2\delta ,p+\delta )\right) .$
\end{proposition}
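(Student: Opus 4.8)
The plan is to show that on the region $r^{-1}([0,\pi]\setminus(p-2\delta,p+\delta))$, the horizontal Ricci tensor $\mathrm{Ric}_\nu^h$ given by Equation (\ref{horiz ricci}) is a small perturbation of $\mathrm{Ric}_{S^2_{\sin}}^h = g_{\sin}$, which equals $1$ on unit horizontal vectors. Writing $-\mathrm{Ric}_\nu^h$ in the $dr^2$ and $d\theta^2$ components, we must bound each coefficient. On this region Lemma \ref{incomprehensible} gives $||\varphi-\sin||_{C^2}<\varepsilon$ and $||\nu||_{C^2}\le 20\eta/k$, so the terms $k(\ddot\nu+\dot\nu^2)$ and $k\dot\nu\dot\varphi$ are controlled by $\eta$ (after fixing $\eta$ small enough, using $|\dot\varphi|\le 1+\varepsilon$), and $\ddot\varphi/\varphi$ is within $O(\varepsilon)$ of $-1$ away from the poles; near the poles one must be slightly more careful because $\varphi\to 0$.

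The key steps, in order, are: (i) fix $\eta>0$ small enough that $40\eta + 400\eta^2/k \le 1/4$ (or a similar explicit inequality capturing that the $\nu$-contributions to both components of $-\mathrm{Ric}_\nu^h$ are at most $1/4$ in absolute value); (ii) for a given $p\in(0,\pi/4)$, choose $\varepsilon(p)$ small enough that on $[0,\pi]\setminus(p-\delta,p+\delta)$ the quantities $\ddot\varphi/\varphi$ and $\ddot\varphi$ differ from $-\sin r/\sin r = -1$ and $-\sin r$ respectively by at most, say, $1/8$ — here one uses that $(\sin)^{\cdot\cdot}/\sin \equiv -1$ and invokes the $C^2$-closeness together with a lower bound on $\varphi$ away from the poles; (iii) handle the poles: by part (2) of the Lemma, $\varphi=\sin$ on neighborhoods of $0$ and $\pi$, and by part (3), $\nu$ is constant there, so on those neighborhoods $-\mathrm{Ric}_\nu^h = \mathrm{Ric}_{S^2_{\sin}}^h$ exactly, giving $\mathrm{Ric}_\nu^h = 1 \ge 1/2$; (iv) on the remaining compact region — bounded away from both the poles and from $p$ — combine (i) and (ii): each component of $-\mathrm{Ric}_\nu^h$ divided by the corresponding metric component is $\le -1 + 1/8 + 1/4 < -1/2$, hence $\mathrm{Ric}_\nu^h \ge 1/2$ on unit horizontal vectors. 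Note that the region $(p-2\delta,p-\delta]$ is included in the "good" set by part (5)'s bound and the $C^2$-closeness of $\varphi$ there, so no interval is missed.

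The main obstacle I anticipate is the behavior near the poles $r=0$ and $r=\pi$: the coefficient $\ddot\varphi/\varphi$ in the $dr^2$ term and the factor $\dot\varphi/\varphi$ lurking inside $\ddot\nu + \dot\varphi\dot\nu/\varphi$ (in the vertical equation — though here we only need the horizontal one) have a $1/\varphi$ that blows up as $\varphi\to 0$, so one cannot naively absorb errors into an $\varepsilon$-perturbation uniformly. The resolution, already signaled by the Lemma, is that $\varphi=\sin$ identically and $\nu$ is constant near the poles, so there the computation is exact rather than perturbative; the only real estimation happens on a compact subinterval of $(0,\pi)$ away from $p$, where $\varphi$ and $\sin$ are both bounded below by a positive constant depending on $p$ (and on the fixed pole-neighborhoods), which is precisely where the dependence $\varepsilon = \varepsilon(p)$ enters. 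A secondary technical point is to make sure the choice of $\eta$ is genuinely independent of $p$ — this works because the $\nu$-terms are estimated using only $||\nu||_{C^2}\le 20\eta/k$ and $|\dot\varphi|\le 1+\varepsilon\le 2$, neither of which involves $p$.
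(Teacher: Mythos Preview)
Your proposal is correct and follows the same approach as the paper: on the complement of $(p-2\delta,p+\delta)$ use the bounds $\|\varphi-\sin\|_{C^2}<\varepsilon$ and $\|\nu\|_{C^2}\le 20\eta/k$ from Lemma~\ref{incomprehensible} to conclude that $\mathrm{Ric}_\nu^h$ is a small perturbation of the round-sphere value $1$, hence $\ge 1/2$ once $\eta$ and then $\varepsilon$ are chosen small. Your write-up is considerably more explicit than the paper's three-line sketch---in particular your handling of the poles via parts (2) and (3) of the Lemma is exactly what makes the $1/\varphi$ issue disappear---though your closing remark about $(p-2\delta,p-\delta]$ is unnecessary, since that interval already lies inside the excluded set $(p-2\delta,p+\delta)$ and is not part of the region under consideration.
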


\begin{proof}
On $r^{-1}\left( [0,\pi ]\setminus (p-2\delta ,p+\delta )\right) ,$ we have $%
||\varphi -\sin ||_{C^{2}}<\varepsilon $ and $||\nu ||_{C^{2}}<20\pi \frac{%
\eta }{k}$. Since $\mathrm{Ric}_{F}\geq 1$, we have $\mathrm{Ric}\left(
S^{2}\left( 1\right) \times F\right) \geq 1.$ Combining these facts with 
\begin{equation*}
\mathrm{Ric}_{\nu }^{h}=-\left[ \frac{\ddot{\varphi}}{\varphi }+k(\ddot{\nu}+%
\dot{\nu}^{2})\right] dr^{2}-\varphi \left[ \ddot{\varphi}+k\dot{\nu}\dot{%
\varphi}\right] d\theta ^{2}
\end{equation*}%
gives the result.
\end{proof}

\begin{proof}[Proof of Theorem \protect\ref{main theorem}]
Let $\eta >0$ be as in Proposition \ref{still pos}. Given $C>0,$ choose $%
p\in \left( 0,\frac{\pi }{4}\right) $ so that 
\begin{equation*}
-\frac{\eta }{2p}<-C.
\end{equation*}%
Choose $\varphi $ and $\nu $ as in Proposition \ref{still pos}. Then 
\begin{equation*}
\mathrm{Ric}_{\nu }^{h} \geq 1/2
\end{equation*}%
on $r^{-1}\left( [0,\pi ]\setminus (p-2\delta ,p+\delta )\right) ,$ and by
Part 4 of Lemma \ref{incomprehensible}, $\mathrm{Ric}_{\nu }^{h}>0$ on $%
r^{-1}(p-2\delta ,p+\delta ).$ It follows that $\mathrm{Ric}_{\nu } >0$
everywhere.

On the other hand, along $r^{-1}\left( p\right) ,$ the Ricci curvature of $%
S_{\varphi }^{2}$ is 
\begin{equation*}
\mathrm{Ric}_{S_{\varphi }^{2}}=-\frac{\ddot{\varphi}\left( p\right) }{%
\varphi \left( p\right) }g_{\varphi }.
\end{equation*}%
Since $\ddot{\varphi}\left( p\right) =\eta $ and $\varphi \left( p\right) $
is almost $\sin \left( p\right) ,$ 
\begin{equation*}
\mathrm{Ric}_{S_{\varphi }^{2}}\leq -\frac{\eta }{2p}<-C
\end{equation*}
as desired.
\end{proof}

\begin{remark}
The proof above exploits the principle of Lemma 3.1 of \cite{PetWilh1} about
which components of the curvature change a lot under a certain type of $%
C^{1} $--small deformation. This principle will be applied in a revised
version \cite{PetWilh2}, where a revised version of Lemma 3.1 of \cite%
{PetWilh1} will appear.
\end{remark}

\section{The Base has a Positive Ricci Curvature}

In this section, we prove Theorem \ref{No Global Example} with an argument
that is similar in spirit to the proof of the main theorem of \cite{OuWilh}.
We use the notation of \cite{O} for the infinitesimal geometry of a
submersion.

Suppose $\pi :M\longrightarrow B$ is a Riemannian submersion with $M$
compact, $\mathrm{Ric}_M>0$ and $\mathrm{Ric}_B\leq 0.$ Exploiting the
compactness of the unit tangent bundle $T^{1}M$ of $M,$ we get Propositions %
\ref{alm periodic} and \ref{unif cont}, stated below. To pose them we need a
metric space structure on $T^{1}M.$ Any metric that induces the topology of $%
T^{1}M$ will serve, the most geometrically relevant is perhaps the Sasaki
metric (see \cite{Sas}).

\begin{proposition}
\label{alm periodic}Let $\left( T^{1}M,\mathrm{dist}\right) $ be the unit
tangent bundle of $M$ with any metric distance that induces the topology
of $T^{1}M$. For any $\varepsilon >0,$ there is a unit speed geodesic $%
\gamma :\left[ 0,l\right] \longrightarrow M$ so that $l\geq 1$ and 
\begin{equation*}
\mathrm{dist}\left( \dot{\gamma}\left( 0\right) ,\dot{\gamma}\left( l\right)
\right) <\varepsilon.
\end{equation*}
\end{proposition}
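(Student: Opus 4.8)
The plan is to deduce this statement purely from the recurrence of the geodesic flow on the compact space $T^{1}M$; no curvature hypothesis is needed here, and indeed the assumptions on $\ric_M$ and $\ric_B$ will only enter in the later contradiction argument, which requires a geodesic whose endpoint velocity nearly agrees with its initial velocity. Since $M$ is compact it is complete, so by Hopf--Rinow every geodesic extends to all of $\mathbb{R}$, and the geodesic flow $\phi\colon\mathbb{R}\times T^{1}M\to T^{1}M$, $\phi_{t}(w)=\dot{\gamma}_{w}(t)$ (where $\gamma_{w}$ is the unit speed geodesic with $\dot{\gamma}_{w}(0)=w$), is well defined and smooth, hence continuous for the topology of $T^{1}M$ and therefore for the distance $\mathrm{dist}$.

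First I would fix any $v\in T^{1}M$ and consider the sequence $\{\phi_{n}(v)\}_{n\in\mathbb{N}}$ of time-$n$ iterates. Because $(T^{1}M,\mathrm{dist})$ is a compact metric space it is sequentially compact, so this sequence has a convergent subsequence; in particular there exist integers $m<n$ with
\[
\mathrm{dist}\left(\phi_{m}(v),\phi_{n}(v)\right)<\varepsilon .
\]
Set $l:=n-m$, so that $l\geq 1$, and let $\gamma$ be the unit speed geodesic on $[0,l]$ with $\dot{\gamma}(0)=\phi_{m}(v)$. By the flow property $\phi_{s+t}=\phi_{s}\circ\phi_{t}$,
\[
\dot{\gamma}(l)=\phi_{l}\left(\phi_{m}(v)\right)=\phi_{n}(v),
\]
and hence $\mathrm{dist}(\dot{\gamma}(0),\dot{\gamma}(l))=\mathrm{dist}(\phi_{m}(v),\phi_{n}(v))<\varepsilon$, which is what was wanted.

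There is essentially no serious obstacle here; the only points that need care are that completeness — and hence a globally defined geodesic flow — follows from compactness of $M$, and that sequential compactness, the form of compactness actually used, is available precisely because $\mathrm{dist}$ metrizes $T^{1}M$. One could alternatively invoke Poincar\'{e} recurrence for the Liouville-measure-preserving geodesic flow, but the pigeonhole argument above avoids measure theory and directly produces integer return times, which is exactly what guarantees the lower bound $l\geq 1$.
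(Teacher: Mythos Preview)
Your argument is correct and is exactly the standard pigeonhole argument the paper intends: the paper does not spell out a proof of this proposition, stating only that it follows from the compactness of $T^{1}M$. Your attention to completeness of $M$ and to sequential compactness of the metrized unit tangent bundle fills in precisely the details the authors leave implicit.
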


\begin{proposition}
\label{unif cont}Let $\mathcal{H}^{1}$ be the set of unit horizontal vectors
for $\pi .$ The map $\mathcal{T}:\left( \mathcal{H}^{1},\mathrm{dist}\right)
\longrightarrow R$ defined by%
\begin{equation*}
\mathcal{T}\left( x\right) =\mathrm{Trace}\left( V\longmapsto T_{V}x\right)
\end{equation*}%
is uniformly continuous. Here $T$ is O'Neill's $T$--tensor.
\end{proposition}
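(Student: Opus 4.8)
The plan is to identify $\mathcal{T}$ with the restriction to $\mathcal{H}^{1}$ of a smooth function on $TM$ and then invoke compactness of $M$. First I would note that $\mathcal{H}^{1}$ is compact: it is the unit sphere bundle of the smooth subbundle $\mathcal{H}\subset TM$, hence a closed subset of $T^{1}M$, and $T^{1}M$ is compact because $M$ is.

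Next I would put $\mathcal{T}$ into a manifestly continuous form. Fix $q\in M$, let $\{V_{1},\dots ,V_{k}\}$ be an orthonormal basis of the vertical space $\mathcal{V}_{q}$, and let $x\in\mathcal{H}^{1}_{q}$. Since $T_{V}$ is skew-symmetric with respect to $g$ and interchanges $\mathcal{H}$ and $\mathcal{V}$, the trace of the endomorphism $V\mapsto T_{V}x$ of $\mathcal{V}_{q}$ is
\begin{equation*}
\mathcal{T}(x)=\sum_{i=1}^{k}\langle T_{V_{i}}x,V_{i}\rangle=-\sum_{i=1}^{k}\langle T_{V_{i}}V_{i},x\rangle=-\langle N,x\rangle ,
\end{equation*}
where $N:=\sum_{i}T_{V_{i}}V_{i}$ is the (unnormalized) mean curvature vector field of the fibers. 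The right-hand side does not depend on the chosen orthonormal basis, so $N$ is a globally defined horizontal vector field on $M$, and it is smooth because $\mathcal{V}$, $\mathcal{H}$, and O'Neill's tensor $T$ are smooth. Hence $x\mapsto -\langle N,x\rangle$ is a smooth function on all of $TM$; in particular $\mathcal{T}$ is continuous on $\mathcal{H}^{1}$.

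Finally I would apply the Heine--Cantor theorem: a continuous real-valued function on a compact metric space is uniformly continuous. Since $(\mathcal{H}^{1},\mathrm{dist})$ is a compact metric space --- for any metric inducing its topology --- and $\mathcal{T}$ is continuous on it, $\mathcal{T}$ is uniformly continuous.

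I do not expect a genuine obstacle here; all of the content sits in the compactness of $M$. The only point needing a word of care is that the trace defining $\mathcal{T}$ is basis-independent and varies smoothly with $x$, and the identification $\mathcal{T}(x)=-\langle N,x\rangle$ with (minus) the fiber mean curvature makes this transparent.
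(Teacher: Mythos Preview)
Your argument is correct and matches the paper's approach: the paper does not give a detailed proof of this proposition, merely stating that it (and the preceding proposition) follow by ``exploiting the compactness of the unit tangent bundle $T^{1}M$,'' which is exactly what you do via Heine--Cantor. Your identification $\mathcal{T}(x)=-\langle N,x\rangle$ with the fiber mean curvature is the same computation the paper carries out later in the proof of Theorem~2, so your write-up actually supplies the details the paper leaves implicit.
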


\begin{proof}[Proof of Theorem 2]
Let $\gamma $ be a horizontal, almost periodic geodesic as in Proposition %
\ref{alm periodic}. By re-scaling we may assume that $\mathrm{Ric}_M\geq 1.$
Then%
\begin{equation*}
l\leq \int_{0}^{l}\mathrm{Ric\,}\left( \dot{\gamma},\dot{\gamma}\right) .
\end{equation*}%
Let $\left\{ V_{i}\right\} _{i=1}^{k}$ be a vertically-parallel, orthonormal
framing for the vertical distribution along $\gamma ,$ and let $\left\{
E_{j}\right\} _{j=1}^{b}$ be an orthonormal framing for the horizontal
distribution along $\gamma .$ Using O'Neill's vertizonal and horizontal
curvature equations and the hypothesis that $\mathrm{Ric}_{B}\leq 0,$ we get%
\begin{eqnarray}
l &\leq &\int_{0}^{l}\mathrm{Ric\,}\left( \dot{\gamma},\dot{\gamma}\right) 
\notag \\
&=&\sum\limits_{i}\int_{0}^{l}D_{\dot{\gamma}}\left\langle T_{V_{i}}V_{i},%
\dot{\gamma}\right\rangle -\left\vert T_{V_{i}}\dot{\gamma}\right\vert
^{2}+\left\vert A_{\dot{\gamma}}V_{i}\right\vert ^{2}  \notag \\
&&+\sum_{j}\int_{0}^{l}\mathrm{Ric}_{B}\left( \dot{\gamma},\dot{\gamma}%
\right) -3\left\vert A_{\dot{\gamma}}E_{j}\right\vert ^{2}  \notag \\
&\leq &\sum\limits_{i}\left( \left\langle T_{V_{i}}V_{i},\dot{\gamma}%
\right\rangle |_{0}^{l}+\int_{0}^{l}\left\vert A_{\dot{\gamma}%
}V_{i}\right\vert ^{2}\right) +\sum_{j}\int_{0}^{l}-3\left\vert A_{\dot{%
\gamma}}E_{j}\right\vert ^{2}.  \label{int Ricci}
\end{eqnarray}%
Since $\left\{ V_{i}\right\} _{i=1}^{k}$ and $\left\{ E_{j}\right\}
_{j=1}^{b}$ are orthonormal frames,%
\begin{eqnarray*}
\sum_{j}\left\vert A_{\dot{\gamma}}E_{j}\right\vert ^{2}
&=&\sum_{j}\sum\limits_{i}\left\langle A_{\dot{\gamma}}E_{j},V_{i}\right%
\rangle ^{2} \\
&=&\sum_{i}\sum\limits_{j}\left\langle E_{j},A_{\dot{\gamma}%
}V_{i}\right\rangle ^{2} \\
&=&\sum_{i}\left\vert A_{\dot{\gamma}}V_{i}\right\vert ^{2}.
\end{eqnarray*}%
Therefore $\ref{int Ricci}$ gives us%
\begin{eqnarray*}
1 &\leq &l\leq \sum\limits_{i}\left( \left\langle T_{V_{i}}V_{i},\dot{\gamma}%
\right\rangle |_{0}^{l}\right) \\
&=&\sum\limits_{i}-\left\langle V_{i},T_{V_{i}}\dot{\gamma}\right\rangle
|_{0}^{l} \\
&=&\mathrm{Trace}\left( W\longmapsto -T_{W}\dot{\gamma}\right) |_{0}^{l}.
\end{eqnarray*}%
From Propositions \ref{alm periodic} and \ref{unif cont}, we can choose a
horizontal $\gamma :\left[ 0,l\right] \longrightarrow M$ so that 
\begin{equation*}
\mathrm{Trace}\left( W\longmapsto -T_{W}\dot{\gamma}\right) |_{0}^{l}<\frac{1%
}{2},
\end{equation*}%
which gives a contradiction.
\end{proof}


\begin{thebibliography}{9}
\bibitem{GromWals} D. Gromoll and G. Walschap, \emph{Metric Foliations and
Curvature}, Birkh\"{a}user, 2009.

\bibitem{O} B. O'Neill, The fundamental equations of a submersion, Michigan
Math. J. 13 (1966), 459--469.

\bibitem{OuWilh} Ou, Y., and Wilhelm, F. \emph{Horizontally homothetic
submersions and nonnegative curvature}, Indiana University Mathematics
Journal 56 (2007), 243-262.

\bibitem{Peter} P. Petersen, \emph{Riemannian Geometry}, 2nd Ed.

\bibitem{PetWilh1} P. Petersen and F. Wilhelm, \emph{Principles for
deforming nonnegative curvature}, preprint,
http://arxiv.org/pdf/0908.3026.pdf.

\bibitem{PetWilh2} P. Petersen and F. Wilhelm, \emph{An exotic sphere with
positive sectional curvature}, preprint,
http://arxiv.org/pdf/0805.0812v3.pdf.

\bibitem{Pro} C. Pro, \emph{On Riemannian submersions and Diffeomorphism
Stability}, Ph.D. thesis, University of California, Riverside, 2012.

\bibitem{Sas} S. Sasaki, \emph{On the differential geometry of tangent
bundles of Riemannian manifolds} Tohoku Math. J. (2) Volume 10, Number 3
(1958), 338-354.
\end{thebibliography}
\end{document}